\begin{document}
\title{Orientation Scores should be a Piece of Cake}
%
\author{
Finn M. Sherry\inst{1}
\and
Chase van de Geijn\inst{2}
\and
Erik J. Bekkers\inst{3}
\and 
Remco Duits\inst{1}
}
\authorrunning{F.M. Sherry et al.}
%
\institute{
CASA \& EAISI, Dept. of Mathematics \& Computer Science, Eindhoven University of Technology, the Netherlands\\
\email{\{f.m.sherry,r.duits\}@tue.nl}
\and
Institute of Computer Science, Georg-August University Göttingen, Germany \\
\email{\{chase.geijn\}@uni-goettingen.de}
\and
AMLab, Informatics Insitute, University of Amsterdam, the Netherlands\\
\email{\{e.j.bekkers\}@uva.nl}
}
\maketitle

\begin{abstract}
We axiomatically derive a family of wavelets for an orientation score, lifting from position space $\Rtwo$ to position and orientation space $\Rtwo \times S^1$, with fast reconstruction property, that minimise position-orientation uncertainty.
We subsequently show that these minimum uncertainty states are well-approximated by cake wavelets: for standard parameters, the uncertainty gap of cake wavelets is less than $1.1$, and in the limit, we prove the uncertainty gap tends to the minimum of $1$.
Next, we complete a previous theoretical argument that one does not have to train the lifting layer in (PDE-)G-CNNs, but can instead use cake wavelets.
Finally, we show experimentally that in this way we can reduce the network complexity and improve the neurogeometric interpretability of (PDE-)G-CNNs, with only a slight impact on the model's performance.

\keywords{Orientation Score \and Uncertainty Principle \and PDE-G-CNN}
\end{abstract}

\section{Introduction}\label{sec:introduction}
In recent years, more interpretable and robust alternatives to convolutional neural networks (CNNs) have been developed, including Group Equivariant CNNs (G-CNNs) by Cohen et al. \cite{Cohen2016GroupNetworks}, which make use of regular representations to perform equivariant convolutions. These were generalised by Smets et al. \cite{Smets2022PDENetworks} to PDE-based G-CNNs (PDE-G-CNNs), where the kernels solve parametrised PDEs well-known from classical image analysis. Of particular interest are networks that are equivariant w.r.t. the roto-translation group $\SE(2)$: the kernels (cf. Fig.~\ref{fig:ball}) are then additionally related to the concept of association fields from neurogeometry \cite{Citti2006CorticalSpace,Petitot2003NeurogeometryStructure}, as shown in \cite{Bellaard2023AnalysisPDEGCNNs}. In this way, PDE-G-CNNs further improve interpretability. However, the input data are typically images, which live on the plane instead of the group: the data must therefore first be lifted. In existing (PDE-)G-CNNs, this is done with a trainable G-CNN lifting layer. We aim to enhance the interpretability of these models by fixing the lifting layer.

For this, we take inspiration from classical multi-orientation image processing, in particular the orientation score transform \cite{Duits2007ImageGroup}. The orientation score transform uses an anisotropic wavelet to lift images on $\Rtwo$ to orientation scores on $\SE(2)$ (cf. Fig.~\ref{fig:disentanglement}). We can then interpret $\SE(2)$ as the space of positions and orientations on the plane. Requiring so-called fast approximate reconstruction puts constraints on the wavelet, and Duits et al. proposed so-called \emph{cake wavelets} (cf. Fig.~\ref{fig:cakewavelet}) \cite{Duits2007ImageGroup}. 
In this work, we show that by imposing additional constraints one derives a family of wavelets that are efficiently approximated by cake wavelets. Specifically, we impose an $\SE(2)$ minimum uncertainty constraint, in the style of the $\SIM(2)$ constraint considered by Antoine et al. \cite{Antoine1998DirectionalPatterns}. Combined with polar separability in the Fourier domain, we can extend the minimum uncertainty principle from the circle, as investigated by Barbieri et al. \cite{Barbieri2012UncertaintyArchitecture}, to the plane.

\begin{figure}
\centering
\begin{subfigure}[t]{0.25\textwidth}
\centering
\includegraphics[width=\textwidth]{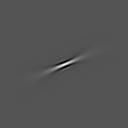}
\caption{Cake wavelet.}\label{fig:cakewavelet}
\end{subfigure}
~ 
\begin{subfigure}[t]{0.4\textwidth}
\begin{tikzpicture}
\draw(0, 0)node[inner sep=0]{\includegraphics[width=\textwidth]{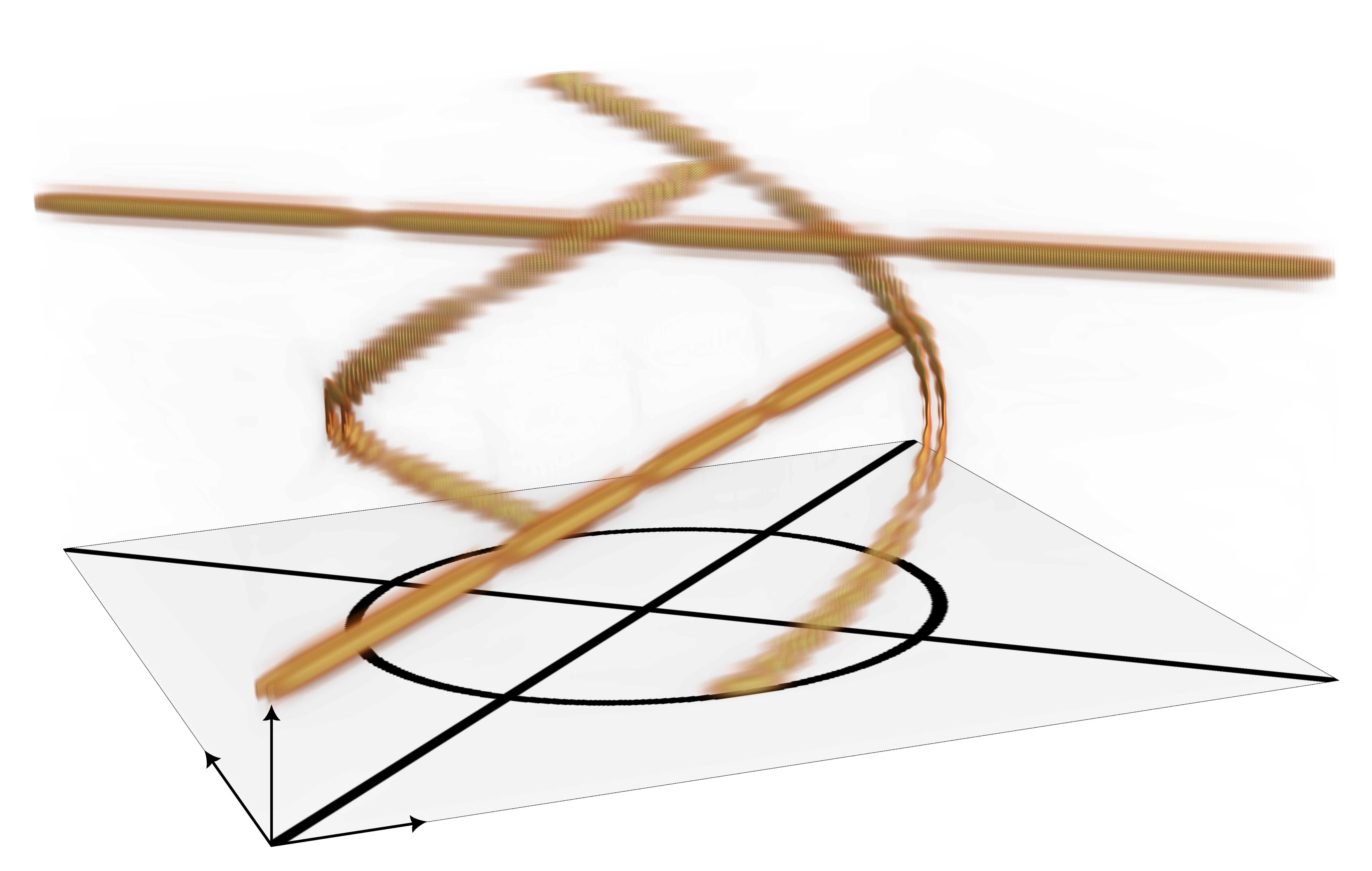}};
\draw(-0.85, -1.3)node{\contour{white}{$x$}};
\draw(-1.79, -1.0)node{\contour{white}{$y$}};
\draw(-1.45, -0.8)node{\contour{white}{$\theta$}};
\end{tikzpicture}
\caption{Orientation score.}\label{fig:disentanglement}
\end{subfigure}
~ 
\begin{subfigure}[t]{0.3\textwidth}
\begin{tikzpicture}
\draw(0, 0)node[inner sep=0]{\includegraphics[width=\textwidth]{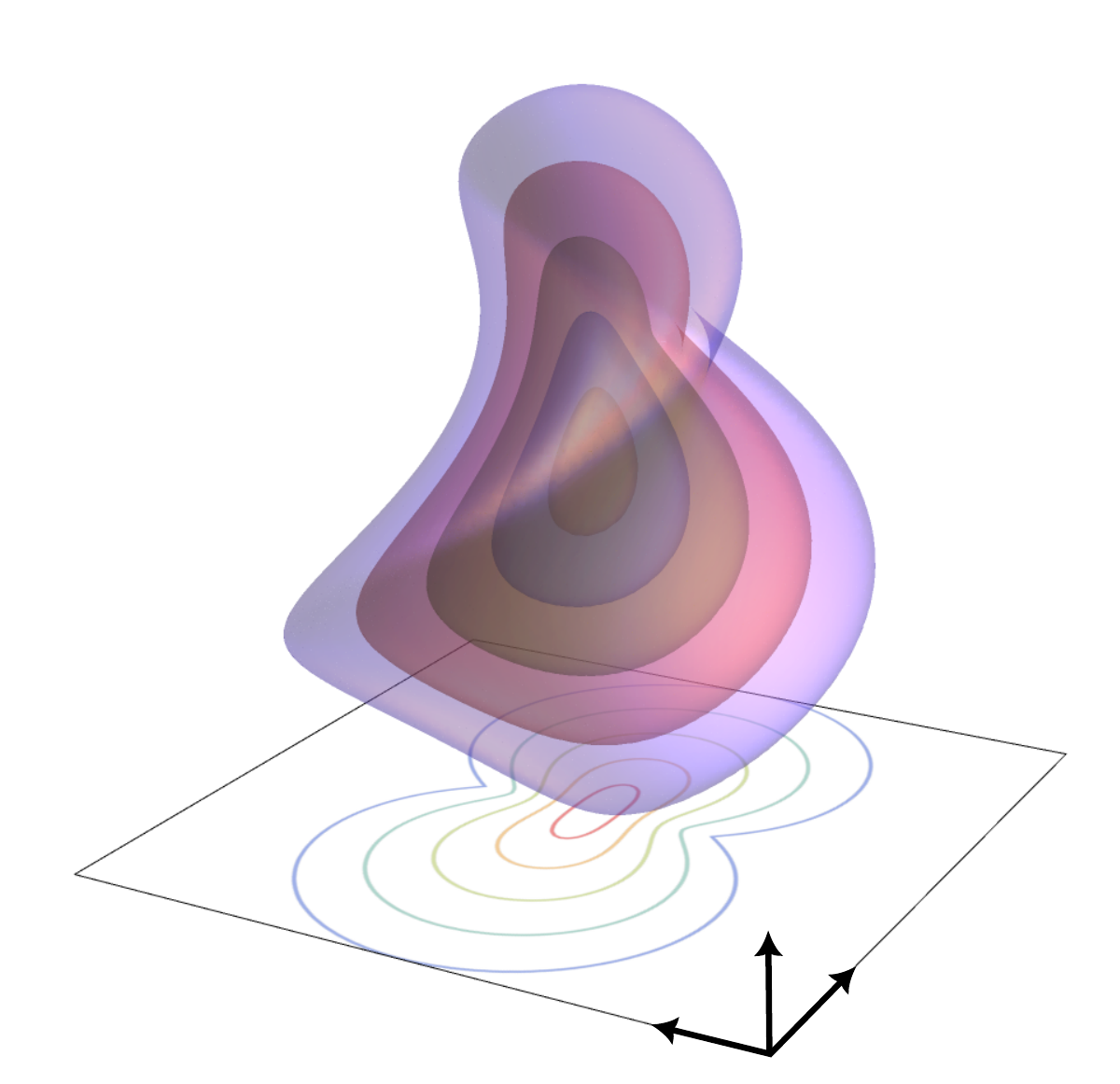}};
\draw(1.0, -1.3)node{\contour{white}{$x$}};
\draw(0.16, -1.55)node{\contour{white}{$y$}};
\draw(0.65, -1.1)node{\contour{white}{$\theta$}};
\end{tikzpicture}
\caption{Typical kernel shape.}\label{fig:ball}
\end{subfigure}
\caption{(a) Cake wavelet for orientation $\theta = \frac{\pi}{8}$. (b) Lifting disentangles crossing and overlapping structures. (c) Typical shape of a kernel in a PDE-G-CNN.}
\label{fig:summary}
\end{figure}

\subsubsection{Contribution.} We propose four axioms as desiderata for a fixed SE(2) lifting kernel: (1) polar separability, (2) fast reconstruction, (3) directionality, (4) minimum uncertainty. From these axioms, we derive a family of wavelets for orientation score transforms show that these are well-approximated by cake wavelets \cite{Duits2007ImageGroup}: for standard parameters, the uncertainty gap of cake wavelets is less than $1.1$ (Fig.~\ref{fig:convergence}). Moreover, we prove that in the limit the uncertainty gap tends to the minimum of $1$ (Thm.~\ref{thm:convergence}). We then show that (PDE-)G-CNNs that lift with cake wavelets can perform on par with networks using a trained lifting layer, while needing fewer parameters (Thm.~\ref{thm:no_need_to_train_the_lifting_layer} and Fig.~\ref{fig:experiments}). We furthermore argue that interpretability of (PDE)-G-CNNs is improved: instead of training kernels on $\Rtwo$, we train convections on $\SE(2)$, which centre \cite[Prop.~3]{Bellaard2023AnalysisPDEGCNNs} association fields from neurogeometry \cite{Citti2006CorticalSpace,Petitot2003NeurogeometryStructure} trained in subsequent PDE-G-CNN \cite{Smets2022PDENetworks} layers.
\section{Orientation Score Transform}\label{sec:orientation_score_transform}
\begin{definition}[Special Euclidean Group]\label{def:SE2}
The 2D \emph{special Euclidean group} is defined as the Lie group $\SE(2) \coloneqq \Rtwo \rtimes \SO(2)$ of roto-translations on two dimensional Euclidean space, with group product
$(\vec{x}, R) (\vec{y}, S) = (\vec{x} + R \vec{y}, R S)$.
$\SE(2)$ acts on $\Rtwo$ by $U_{(\vec{x}, R)} \vec{y} \coloneqq \vec{x} + R \vec{y}$. 
Since $\SO(2) \cong S^1$, we can identify rotations and angles; we write $R_\theta$ for the counter-clockwise rotation by angle $\theta$.
\end{definition}
This action induces the \emph{quasi-regular representation} on functions on $\Rtwo$:
\begin{definition}[Quasi-Regular Representation]\label{def:representation_U}
The \emph{quasi-regular representation} $\mathcal{U}: \SE(2) \to B(\Ltwo(\Rtwo))$ is the Lie group homomorphism defined for all $g \in \SE(2)$ and $f \in \Ltwo(\Rtwo)$ as 
$\mathcal{U}_g f \coloneqq f \after U_g^{-1}$. 
The representation $\hat{\mathcal{U}} = \mathcal{F} \after \,\mathcal{U} \after \mathcal{F}^{-1}$\!, i.e. its Fourier transform, is then given by
$\hat{\mathcal{U}}_{(\vec{x}, R)} \hat{f}(\vec{\omega}) = \exp(-i \vec{\omega} \cdot \vec{x}) \hat{f}(R^{-1} \vec{\omega})$.
\end{definition}
We want to use the orientation score transform to lift an image defined on the space of positions $\Rtwo$ to the space of positions and orientations $\SE(2)$. In practice 
we use the subgroup $\SE(2, N) \coloneqq \Rtwo \rtimes \SO(2, N)$, where $\SO(2, N)$ are the rotations that fix a regular $N$-gon. 
\begin{definition}[Orientation Score Transform]\label{def:orientation_score_transform}
The \emph{orientation score transform} $\mathcal{W}_\psi: \Ltwo(\Rtwo) \to \Ltwo(\SE(2, N))$, with \emph{proper wavelet} $\psi$, is defined by \cite{Duits2007ImageGroup}
\begin{equation}\label{eq:orientation_score_transform}
\mathcal{W}_\psi f (\vec{x}, \theta) \coloneqq (\mathcal{U}_{(\vec{x}, \theta)} \psi, f) = \int_{\Rtwo} \overline{\psi(R_{\theta}^{-1} (\vec{y} - \vec{x}))} f(\vec{y}) \diff \vec{y}
\end{equation}
for $(\vec{x}, \theta) \in \SE(2, N)$. We then call $\mathcal{W}_\psi f$ the \emph{orientation score} of image $f$.
\end{definition}
See Fig.~\ref{fig:disentanglement} for a typical example of an orientation score. 
The orientation score transform is well-defined if the wavelet is \emph{proper}, i.e. $\psi \in \Lone(\Rtwo) \intersection \Ltwo(\Rtwo)$ and the corresponding orientation score transform has a stable inverse. For stable inversion we limit the domain to $\Ltwo^{\rho_0}(\Rtwo) \coloneqq \{f \in \Ltwo(\Rtwo) \mid \supp \hat{f} \subset B(\vec{0}, \rho_0)\}$. The inversion is then stable ($\cond(\mathcal{W}_\psi) \leq \frac{M}{\delta}$) on $\mathcal{W}_\psi(\Ltwo^{\rho_0}(\Rtwo))$ if $0 < \delta \leq M_\psi(\omega) \leq M < \infty$ on $B(\vec{0}, \rho_0)$, where 
$M_\psi(\vec{\omega}) \coloneqq \sum_{\theta \in \SO(2, N)} \abs{\hat{\psi}(R_\theta^{-1} \vec{\omega})}^2$ \cite{Duits2007ImageGroup}.
\begin{definition}[Cake Wavelets]\label{def:cake_wavelets}
For $k \in \N$, we define the \emph{$k$-th order B-spline} $B_k$ as the density function of the sum of $k + 1$
i.i.d.
$\Uniform[-\frac{1}{2}, \frac{1}{2}]$ random variables modulo $2\pi$. Hence, 
$B_0 = \vec{1}\left[-\frac{1}{2}, \frac{1}{2}\right]$, and $B_k = B_0 *^{(k)} B_0$, with $*$ convolution on the circle.
Then, we define the \emph{cake wavelets} \cite{Duits2007ImageGroup} for $N \in \N$ orientations with $k$-th order B-splines as
\begin{equation} \label{eq:cake_wavelets}
\hat{\psi}(\rho \cos(\phi), \rho \sin(\phi)) \coloneqq \mathcal{M}(\rho) \Phi_{N, k}(\phi) \coloneqq \vec{1}_{B(\vec{0}, \rho_0)}(\rho) B_k\left(\frac{N}{2 \pi} \phi\right).
\end{equation}
\end{definition}
See Fig.~\ref{fig:cakewavelet} for a typical cake wavelet. We will now axiomatically derive a family of lifting wavelets, and show that they are well-approximated by cake wavelets. 
\begin{desiderata}[Wavelet Constraints]\label{des:wavelet_constraints}
We have the following desiderata for the orientation score transform, and corresponding constraints on the wavelet:
\begin{enumerate}
\item  \textbf{Polar separability in the Fourier domain}, i.e. 
\begin{equation} \label{eq:polar_separability}
\hat{\psi}(\rho \cos(\phi), \rho \sin(\phi)) = \mathcal{M}(\rho) \Phi(\phi).
\end{equation}
\item \textbf{Fast reconstruction:} $f \approx \sum_{\theta \in \SO(2, N)} \mathcal{W}_\psi f (\cdot, \theta)$ for all $f \in \Ltwo^{\rho_0}(\Rtwo)$
$\implies$ $N_\psi(\vec{\omega}) \approx 1$ on $B(\vec{0}, \rho_0)$, where 
$N_\psi(\vec{\omega}) \coloneqq \sum_{\theta \in \SO(2, N)} \hat{\psi}(R_\theta^{-1} \vec{\omega})$.
\item \textbf{Directionality:} $\hat{\psi}$ is centred around
the $y$-axis, i.e.
$\int_\R \omega^1 \abs{\hat{\psi}(\omega^1, \omega^2)}^2 \diff \omega^1 = 0$.
\end{enumerate}
\end{desiderata}
Polar separability is necessary for a well-posed uncertainty principle as we will see later. 
While the orientation score transform can be inverted for any proper wavelet, the exact inversion formula is expensive to evaluate \cite{Duits2007ImageGroup}. After processing in the space of positions and orientations, we invariably have to map the orientation score back to an image, so that the fast reconstruction property is highly desirable. 
Our notion of directionality is related to, but distinct from, that in \cite{Antoine1998DirectionalPatterns}. By centering the wavelet around the $y$-axis in the Fourier domain, it will be oriented along the $x$-axis in the spatial domain. Consequently, it will associate the \enquote{correct} orientation with an oriented feature, e.g. $\theta = \pi/2$ for vertical features. 
\begin{corollary}\label{cor:polar_profiles}
We must choose $\mathcal{M}(\rho) \approx 1$ on $[0, \rho_0]$ and $\sum_{\theta \in \SO(2, N)} \Phi(\phi - \theta) \approx 1$ on $S^1$ to satisfy the desiderata.
\end{corollary}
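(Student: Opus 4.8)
The plan is to feed the polar-separability ansatz \eqref{eq:polar_separability} straight into the fast-reconstruction functional $N_\psi$ and exploit the resulting product structure. Writing $\vec{\omega} = \rho(\cos\phi, \sin\phi)$, the rotation $R_\theta^{-1} = R_{-\theta}$ sends this to $\rho(\cos(\phi - \theta), \sin(\phi - \theta))$, so polar separability gives $\hat{\psi}(R_\theta^{-1}\vec{\omega}) = \mathcal{M}(\rho)\,\Phi(\phi - \theta)$. Since $\mathcal{M}(\rho)$ does not depend on $\theta$, I can pull it out of the finite sum over $\theta \in \SO(2, N)$ and obtain the factorisation
\[
N_\psi(\vec{\omega}) = \mathcal{M}(\rho)\sum_{\theta\in\SO(2,N)}\Phi(\phi-\theta),
\]
a product of a function of $\rho$ alone and a function of $\phi$ alone.

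Next I would impose fast reconstruction, $N_\psi \approx 1$ on $B(\vec{0}, \rho_0)$, i.e. $\mathcal{M}(\rho)\,S(\phi) \approx 1$ for all $\rho \in [0, \rho_0]$ and all $\phi \in S^1$, where $S(\phi) \coloneqq \sum_{\theta\in\SO(2,N)}\Phi(\phi-\theta)$. The crucial point is a separation-of-variables argument: a product $g(\rho)\,h(\phi)$ that is (approximately) the constant $1$ over the full rectangle $[0,\rho_0]\times S^1$ forces each factor to be (approximately) constant. Indeed, freezing an angle $\phi_\ast$ shows $\mathcal{M}(\rho) \approx 1/S(\phi_\ast)$ is independent of $\rho$, and freezing a radius $\rho_\ast$ shows $S(\phi) \approx 1/\mathcal{M}(\rho_\ast)$ is independent of $\phi$; the two limiting constants necessarily multiply to $1$.

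Finally I would fix the residual scaling freedom. The factorisation $\hat{\psi} = \mathcal{M}\Phi$ is only defined up to $\mathcal{M}\mapsto\lambda\mathcal{M}$, $\Phi\mapsto\lambda^{-1}\Phi$, so I am free to normalise the radial constant to unity. This gives exactly $\mathcal{M}(\rho) \approx 1$ on $[0,\rho_0]$ and, consequently, $\sum_{\theta\in\SO(2,N)}\Phi(\phi-\theta) \approx 1$ on $S^1$, the two claimed conditions. The directionality desideratum plays no role in these two conclusions: it is an independent symmetry constraint on $\abs{\hat{\psi}(\vec{\omega})}^2$ that only fixes where the angular profile $\Phi$ is centred, and it is compatible with any partition-of-unity normalisation of $S$.

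The step I expect to be the main obstacle is making the separation-of-variables argument precise under genuine approximate equality rather than the exact-equality heuristic: one should quantify how $\sup_{B(\vec{0},\rho_0)}\abs{N_\psi(\vec{\omega}) - 1}$ controls the oscillations $\sup_\rho\mathcal{M}(\rho) - \inf_\rho\mathcal{M}(\rho)$ and $\sup_\phi S(\phi) - \inf_\phi S(\phi)$, so that \enquote{$N_\psi$ close to $1$} genuinely implies \enquote{each factor close to constant}. A minor secondary point is to state the normalisation convention explicitly, so that the phrase \emph{must choose} $\mathcal{M}\approx 1$ is read as fixing the scaling of the decomposition and not as an extra hypothesis.
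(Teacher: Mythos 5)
Your proposal is correct and follows essentially the same route as the paper's proof: both substitute the polar-separable ansatz into $N_\psi$, pull $\mathcal{M}(\rho)$ out of the sum over $\SO(2, N)$ so that the angular sum must be (approximately) a constant $C$, and then absorb $C$ by rescaling one factor (the paper rescales $\Phi$; you normalise $\mathcal{M}$ --- the same scaling freedom). Your extra remarks on quantifying the separation-of-variables step under approximate equality and on directionality playing no role go slightly beyond the paper's two-line argument but do not change the approach.
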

\begin{proof}
The first two desiderata give, on $B(\vec{0}, \rho_0)$,
\begin{equation*}
1 \approx \sum_{\theta \in \SO(2, N)} \hat{\psi}(\rho, \phi - \theta) = \mathcal{M}(\rho) \sum_{\theta \in \SO(2, N)} \Phi(\phi - \theta) = C \mathcal{M}(\rho),
\end{equation*}
for some constant $C$. We can w.l.o.g. that assume $C = 1$ by rescaling $\Phi$. \qed
\end{proof}
Cor.~\ref{cor:polar_profiles} fixes the radial profile of the wavelet in the Fourier domain to that of cake wavelets \eqref{eq:cake_wavelets}. Additionally, cake wavelets 
satisfy the constraint on the angular profile imposed by fast reconstruction. Next we add an extra desideratum -- minimising position-orientation uncertainty -- and restrict to a small 
set of wavelets.
\section{Minimum Uncertainty}\label{sec:minimum_uncertainty}
Gabor initiated the study of uncertainty in wavelet analysis in 1946 \cite{Gabor1946TheoryCommunication}: inspired by quantum mechanics, he showed that no wavelet can simultaneously have a certain time and frequency, and that a family of wavelets -- now called Gabor wavelets -- minimise the product of uncertainties. We can view these wavelets as coherent states of the generators of the Heisenberg group. 
Other authors have derived similar results. 
For instance, Antoine et al. \cite{Antoine1998DirectionalPatterns} showed that Cauchy wavelets minimise uncertainty with respect to the translation-rotation-scaling group $\SIM(2) \coloneqq \Rtwo \rtimes (\SO(2) \times \R_{>0})$.
Christensen et al. showed that the uncertainty principle on the circle by Breitenberger \cite{Breitenberger1983UncertaintyObservables} can be derived from an uncertainty principle on $\SE(2)$ \cite{Christensen2004UncertaintyGroup}.
Barbieri et al. obtained the corresponding minimum uncertainty states \cite{Barbieri2012UncertaintyArchitecture} using the irreducible representations of $\SE(2)$ thereon while investigating a model of the visual cortex. 
We first introduce the required machinery. 
\begin{definition}\label{def:expectation_and_variance}
Let $X$ be an operator on Hilbert space $H$, and $\psi \in \mathcal{D}(X)$.
We define \emph{expectation} $\expectation{X}{\psi} \coloneqq (\psi, X \psi)$ and \emph{variance} $\expectation{(X - \expectation{X}{\psi})^2}{\psi}$.
\end{definition}
\begin{lemma}[Uncertainty Principle (UP)]\label{lem:uncertainty_principle}
Let $A$, $B$ be symmetric operators on Hilbert space $H$. Let $[A, B] = AB - BA$ be the \emph{commutator}, with $\mathcal{D}([A, B]) \coloneqq \{\psi \in \mathcal{D}(A) \intersection \mathcal{D}(B) \mid A \psi \in \mathcal{D}(B), B \psi \in \mathcal{D}(A)\}$. 
The \emph{uncertainty gap} is then \cite{Griffiths}
\begin{equation}\label{eq:uncertainty_principle}
\UG_\psi(A, B) \coloneqq \frac{\expectation{(A - \expectation{A}{\psi})^2}{\psi} \expectation{(B - \expectation{B}{\psi})^2}{\psi}}{\frac{1}{4} \abs*{\expectation{[A, B]}{\psi}}^2} \geq 1.
\end{equation}
By the Cauchy-Schwarz equality, $\UG_\psi(A,B) = 1$ iff there is a $\lambda \in \R$ such that
\begin{equation}\label{eq:coherent_state}
(A - \expectation{A}{\psi}) \psi = i \lambda (B - \expectation{B}{\psi}) \psi.
\end{equation}
\end{lemma}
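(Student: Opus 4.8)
The plan is to prove the two claims of Lemma~\ref{lem:uncertainty_principle} separately: first the inequality $\UG_\psi(A,B) \geq 1$, and then the equality characterisation \eqref{eq:coherent_state}. Both follow from the Cauchy--Schwarz inequality applied to the shifted operators, so the first thing I would do is reduce to the centred case. Define $\tilde A \coloneqq A - \expectation{A}{\psi}$ and $\tilde B \coloneqq B - \expectation{B}{\psi}$; since the expectations are real scalars (both $A$ and $B$ are symmetric, so $\expectation{A}{\psi} = (\psi, A\psi)$ is real), the operators $\tilde A, \tilde B$ are again symmetric, and a short computation shows $[\tilde A, \tilde B] = [A, B]$, because the scalar shifts commute with everything. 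Thus the numerator of \eqref{eq:uncertainty_principle} is $\|\tilde A \psi\|^2 \|\tilde B \psi\|^2$ (using symmetry to write $\expectation{\tilde A^2}{\psi} = (\psi, \tilde A^2 \psi) = (\tilde A \psi, \tilde A \psi)$), and it suffices to bound this against $\tfrac{1}{4}|\expectation{[A,B]}{\psi}|^2$.

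Next I would analyse the inner product $(\tilde A \psi, \tilde B \psi)$. Writing $\tilde A \tilde B = \tfrac{1}{2}[\tilde A, \tilde B] + \tfrac{1}{2}\{\tilde A, \tilde B\}$ as a sum of an anti-symmetric and a symmetric part (with $\{\cdot,\cdot\}$ the anticommutator), the expectation of the commutator part $\expectation{[\tilde A, \tilde B]}{\psi} = \expectation{[A,B]}{\psi}$ is purely imaginary, while the anticommutator part is real. Hence
\begin{equation*}
|(\tilde A \psi, \tilde B \psi)|^2 = \left(\tfrac{1}{2}\expectation{\{\tilde A,\tilde B\}}{\psi}\right)^2 + \left(\tfrac{1}{2i}\expectation{[A,B]}{\psi}\right)^2 \geq \tfrac{1}{4}|\expectation{[A,B]}{\psi}|^2.
\end{equation*}
Combining this with the Cauchy--Schwarz inequality $\|\tilde A \psi\|^2 \|\tilde B \psi\|^2 \geq |(\tilde A \psi, \tilde B \psi)|^2$ immediately gives $\|\tilde A \psi\|^2\|\tilde B \psi\|^2 \geq \tfrac{1}{4}|\expectation{[A,B]}{\psi}|^2$, which is exactly $\UG_\psi(A,B) \geq 1$.

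For the equality case, I would track when both of the above inequalities are tight. The Cauchy--Schwarz step is an equality iff $\tilde A \psi$ and $\tilde B \psi$ are linearly dependent, i.e. $\tilde A \psi = c\, \tilde B \psi$ for some scalar $c \in \mathbb{C}$ (assuming $\tilde B \psi \neq 0$; the degenerate case must be handled or excluded). The dropped-term inequality is tight iff $\expectation{\{\tilde A, \tilde B\}}{\psi} = 0$, i.e. the real part of $(\tilde A \psi, \tilde B \psi) = c\,\|\tilde B\psi\|^2$ vanishes, forcing $c$ to be purely imaginary, say $c = i\lambda$ with $\lambda \in \mathbb{R}$. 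This yields precisely $\tilde A \psi = i\lambda \tilde B \psi$, which is \eqref{eq:coherent_state}. The main obstacle, and the point requiring the most care, is the domain bookkeeping: all manipulations involving $\tilde A \tilde B$ and $\tilde B \tilde A$ presuppose that $\psi$ lies in $\mathcal{D}([A,B])$ so that these compositions are defined, and one must verify that the symmetry identities such as $(\psi, \tilde A^2\psi) = \|\tilde A\psi\|^2$ hold on this domain; this is exactly why the lemma restricts to $\psi \in \mathcal{D}([A,B])$, and the algebraic content of the proof is otherwise routine.
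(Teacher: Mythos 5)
Your proposal is correct and takes essentially the same route as the paper: the paper gives no proof of its own but cites Griffiths for precisely this standard Robertson argument (centre the operators, split $(\tilde A \psi, \tilde B \psi)$ into a real anticommutator part and an imaginary commutator part, apply Cauchy--Schwarz), and it derives \eqref{eq:coherent_state} from the Cauchy--Schwarz equality case exactly as you do. Your attention to the domain bookkeeping and to the degenerate case $\tilde B \psi = 0$ (where the denominator of \eqref{eq:uncertainty_principle} vanishes and $\UG_\psi$ is not defined) is appropriate and goes slightly beyond what the paper makes explicit.
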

When the operators are generators of unitary representations -- as is the case in this work -- a stronger result has been shown to hold \cite{Kraus1967FurtherRelations,Christensen2004UncertaintyGroups,Folland1997UncertaintySurvey}: $A$ and $B$ will be self-adjoint, and we can replace $[A, B]$ with its densely defined closure. However, this is unnecessary for our purposes.
To maximally disentangle position-orientation, we want a wavelet minimising an $\SE(2)$ UG. We use unitary group representations, which induce skew-symmetric generators that become symmetric upon multiplication with $i$. The next lemma tells us that unitarily equivalent representations yield equivalent UPs.
\begin{lemma}
Let $\mathcal{R}: G \to B(H)$ be a unitary group representation of Lie group $G$ with unit element $e \in G$. The \emph{Lie algebra representation} $\diff \mathcal{R}$ is given by
\begin{equation}\label{eq:lie_algebra_representation}
\diff \mathcal{R}(A)\psi = \lim_{t\to 0}
t^{-1} (\mathcal{R}_{\exp(t A)} - I) \psi, \quad  A \in T_e(G), \psi \in \mathcal{D}(\diff \mathcal{R}(A))
\end{equation}
with $\mathcal{D}(\diff \mathcal{R}(A)) \coloneqq \{\psi \in H \mid \textrm{limit \eqref{eq:lie_algebra_representation} exists}\}$
and Lie group exponential $\exp$.
If $\mathcal{R}_1$ and $\mathcal{R}_2$ are unitarily equivalent, i.e. there is a unitary $U$ such that $\mathcal{R}_1 = U^{-1} \after \mathcal{R}_2 \after U$, then $
\UG_\psi(i \diff \mathcal{R}_1(A), i \diff \mathcal{R}_1(B)) =
\UG_{U \psi}(i \diff \mathcal{R}_2(A), i \diff \mathcal{R}_2(B))
$.
\end{lemma}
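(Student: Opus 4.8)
The plan is to reduce the statement to a single intertwining identity at the level of the Lie algebra representation, and then to check that every ingredient of the uncertainty gap --- the two expectations, the two variances, and the commutator expectation --- transforms in the same way under the substitution $\psi \mapsto U \psi$, so that the ratio in \eqref{eq:uncertainty_principle} is left invariant.

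First I would establish that $\diff \mathcal{R}_1(A) = U^{-1} \after \diff \mathcal{R}_2(A) \after U$, and likewise for $B$. Since $\mathcal{R}_1 = U^{-1} \after \mathcal{R}_2 \after U$, for every $t$ we have $\mathcal{R}_{1, \exp(t A)} = U^{-1} \mathcal{R}_{2, \exp(t A)} U$. The operator $U$ is unitary, hence bounded, so it may be pulled out of the defining limit \eqref{eq:lie_algebra_representation}; this simultaneously shows that $\psi \in \mathcal{D}(\diff \mathcal{R}_1(A))$ if and only if $U \psi \in \mathcal{D}(\diff \mathcal{R}_2(A))$, and that the identity holds on this common domain. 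Writing $A_j \coloneqq i \diff \mathcal{R}_j(A)$ and $B_j \coloneqq i \diff \mathcal{R}_j(B)$, and using that the scalar $i$ commutes with $U$, we obtain the clean conjugation relations $A_1 = U^{-1} A_2 U$ and $B_1 = U^{-1} B_2 U$.

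The remaining steps are then short computations using only $U^* = U^{-1}$. For the expectations, $\expectation{A_1}{\psi} = (\psi, U^{-1} A_2 U \psi) = (U \psi, A_2 U \psi) = \expectation{A_2}{U \psi}$, and similarly for $B_1$. Consequently the shifted operator satisfies $A_1 - \expectation{A_1}{\psi} = U^{-1} (A_2 - \expectation{A_2}{U \psi}) U$, because the identity commutes with $U$; squaring and taking the expectation in the state $\psi$ turns the variance of $A_1$ in $\psi$ into the variance of $A_2$ in $U \psi$, and the same holds for $B_1$. Finally $[A_1, B_1] = U^{-1} [A_2, B_2] U$, so $\abs*{\expectation{[A_1, B_1]}{\psi}}^2 = \abs*{\expectation{[A_2, B_2]}{U \psi}}^2$. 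Substituting these into \eqref{eq:uncertainty_principle} makes numerator and denominator transform identically, yielding $\UG_\psi(A_1, B_1) = \UG_{U \psi}(A_2, B_2)$, which is the claim.

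I expect the only real obstacle to be domain bookkeeping rather than any analytic difficulty: one must confirm that $U$ maps $\mathcal{D}(\diff \mathcal{R}_1(A))$ onto $\mathcal{D}(\diff \mathcal{R}_2(A))$, and, for the commutator term, that it carries $\mathcal{D}([A_1, B_1])$ onto $\mathcal{D}([A_2, B_2])$, so that all five quantities above are simultaneously defined. Since $U$ is bounded with bounded inverse, it preserves the existence of the limits in \eqref{eq:lie_algebra_representation} as well as the membership conditions $A_1 \psi \in \mathcal{D}(B_1)$ and $B_1 \psi \in \mathcal{D}(A_1)$ defining the commutator domain; this is routine but should be stated explicitly so that no quantity in the uncertainty gap is evaluated outside its domain.
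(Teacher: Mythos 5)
Your proposal is correct and takes essentially the same route as the paper's proof: both pull the unitary $U$ through the defining limit \eqref{eq:lie_algebra_representation} to obtain $i \diff \mathcal{R}_1(A) = U^{-1} \after i \diff \mathcal{R}_2(A) \after U$ and then transfer expectations from $\psi$ to $U\psi$, from which the invariance of $\UG$ follows. You simply make explicit the variance, commutator, and domain bookkeeping that the paper compresses into ``so the result follows.''
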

\begin{proof}
Since $\lim_{t \to 0} t^{-1} ((\mathcal{R}_1)_{\exp(t A)} - I)\psi = U^{-1} \lim_{t \to 0} t^{-1}((\mathcal{R}_2)_{\exp(t A)} - I) U \psi$, $\expectation{i \diff \mathcal{R}_1(A)}{\psi} = \expectation{i U^{-1} \after \diff \mathcal{R}_2(A) \after U}{\psi} = \expectation{i\diff \mathcal{R}_2(A)}{U \psi}$, so the result follows. \qed
\end{proof}
The representations must additionally be irreducible to ensure existence of a minimiser -- rather than a minimising sequence -- of \eqref{eq:uncertainty_principle}. As such, we \emph{must} use the unitary irreducible representations (UIRs) of $\SE(2)$ on $\Ltwo(S^1)$, 
supporting the polar separability desideratum.
\begin{definition}\label{def:irreducible_representation}
The (only) $\SE(2)$ UIRs $\mathcal{V}^{\vec{\omega}}: \SE(2) \to B(\Ltwo(S^1))$ are given by
\begin{equation}\label{eq:irreducible_representation}
\mathcal{V}_{(\vec{x}, \theta)}^{\vec{\omega}} f(\phi) \coloneqq \exp(-i R_\phi \vec{\omega} \cdot \vec{x}) f(\phi - \theta), \quad f \in \mathbb{L}_{2}(S^1),
\end{equation}
for $\vec{\omega} \in \Rtwo$. If $\vec{\omega}_1 = R\vec{\omega}_2$ for rotation $R \in \SO(2)$, then $\mathcal{V}^{\vec{\omega}_1}$ and $\mathcal{V}^{\vec{\omega}_2}$ are unitarily related by $U f = f \after R^{-1}$, i.e. we just rotate the wavelets.
\end{definition}
We choose $\vec{\omega} \propto (0, 1) \eqqcolon \vec{e}_2$, as we want a directional wavelet, and write $\mathcal{V}^\rho \coloneqq \mathcal{V}^{\rho \vec{e}_2}$.
For the Lie algebra $T_e(\SE(2)) = \se(2)$, we define the basis $A_1 \coloneqq \partial_x|_e$, $A_2 \coloneqq \partial_y|_e$, and $A_3 \coloneqq \partial_\theta|_e$, and compute the corresponding generators:
\begin{equation}\label{eq:generators}
i \diff \mathcal{V}^\rho(A_1) = \rho \cos(\phi),
i \diff \mathcal{V}^\rho(A_2) = \rho \sin(\phi), \textrm{ and }
i \diff \mathcal{V}^\rho(A_3) = -i \partial_\phi.
\end{equation}
The uncertainty gap w.r.t. $i \diff \mathcal{V}^\rho(A_2)$ and $i \diff \mathcal{V}^\rho(A_3)$
is minimised if
\begin{equation}\label{eq:minimum_uncertainty_equation}
\rho \sin(\phi) \Phi(\phi) = i \diff \mathcal{V}^\rho(A_2) \Phi(\phi) \overset{\textrm{Lem.~\ref{lem:uncertainty_principle}}}{=} i \lambda \cdot i \diff \mathcal{V}^\rho(A_3) \Phi(\phi) = i \lambda \cdot -i \partial_\phi \Phi(\phi),
\end{equation}
for some $\lambda \in \R$, or, equivalently $\sin(\phi) \Phi(\phi) = -\lambda \partial_\phi \Phi(\phi)$. It then follows that
$\Phi_\lambda^\mathrm{opt}(\phi) \propto \exp\left(\frac{\cos(\phi)}{\lambda}\right)$,
which is the scaled density of the von Mises distribution. 

Recall from Cor.~\ref{cor:polar_profiles} that we need $\sum_{\theta \in \SO(2, N)} \Phi(\phi - \theta) \approx 1$; ideally the sum would equal $1$, which cannot be done with $\Phi_\lambda^{\mathrm{opt}}$. 
However, B-splines \eqref{eq:cake_wavelets} exactly satisfy this constraint. 
Moreover, as they are defined as densities of sums of i.i.d. random variables, they approximate the density of a wrapped normal distribution: 
as (for $\lambda$ small)
$\exp\left(\frac{\cos(\phi)}{\lambda}\right) \approx \exp\left(\frac{1}{\lambda}\right) \sum_{n \in \Z} \exp\left(-\frac{(\phi - 2n\pi)^2}{2 \lambda}\right) \eqqcolon \Phi_\lambda^\mathcal{N}(\phi)$,
they then also approximate a minimum uncertainty state. Next we provide formal global estimates to corroborate these heuristics.
\begin{proposition}\label{prop:convergence}
The uncertainty gap $\UG_{\Phi_{\lambda}^{\mathcal{N}}}$ of $\Phi_\lambda^{\mathcal{N}}$ is bounded from above by
\begin{equation}\label{eq:ug_bounds_gaussian}
\frac{
  -\frac{\lambda}{\pi} \sinh(\lambda) \partial_{\lambda} (\vartheta_3(e^{-\lambda}))^2
}{\abs*{\Re\left(
    \erf\left(\frac{2 \pi - i \lambda}{\sqrt{4 \lambda}}\right) +
    C_\lambda e^{-\frac{\lambda}{2}} \sqrt{8 \pi \lambda} \left(
        \erf\left(\frac{\pi - i \lambda}{\sqrt{2 \lambda}}\right) -
        \erf\left(\frac{\frac{\pi}{2} - i \lambda}{\sqrt{2 \lambda}}\right)
    \right) -
    2 C_\lambda^2
\right)}^2},
\end{equation}
with elliptic theta function of 3rd kind $\vartheta_3$ and
\begin{equation*}
C_\lambda \coloneqq \sqrt{\frac{\lambda}{2 \pi}}\left(1 + \erf\left(\frac{-1}{\sqrt{2\lambda}}\right)\right) + 2 e^{-\pi^2/2\lambda}.
\end{equation*}
Consequently, $\UG_{\Phi^{\mathcal{N}}_{\lambda}} \to 1$ as $\lambda \downarrow 0$.
\end{proposition}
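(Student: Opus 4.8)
The plan is to reduce the uncertainty gap to a ratio of three one–dimensional integrals over $S^1$, evaluate the numerator exactly with the theta function, lower-bound the denominator with error functions, and then pass to the limit via the Jacobi transformation. Writing $A \coloneqq i \diff \mathcal{V}^\rho(A_2) = \rho \sin(\phi)$ and $B \coloneqq i \diff \mathcal{V}^\rho(A_3) = -i \partial_\phi$ as in \eqref{eq:generators}, a direct computation gives the commutator $[A, B] = i \rho \cos(\phi)$. Since $\Phi \coloneqq \Phi_\lambda^{\mathcal N}$ is real and even, $\expectation{A}{\Phi}$ and $\expectation{B}{\Phi}$ both vanish, so each variance reduces to a second moment (Def.~\ref{def:expectation_and_variance}) and the gap collapses to
\begin{equation*}
\UG_\Phi(A, B) = \frac{4 \left(\int_{-\pi}^\pi \sin^2(\phi)\, \Phi(\phi)^2 \diff \phi\right) \left(\int_{-\pi}^\pi \Phi'(\phi)^2 \diff \phi\right)}{\left(\int_{-\pi}^\pi \cos(\phi)\, \Phi(\phi)^2 \diff \phi\right)^2},
\end{equation*}
in which the factor $\rho^2$ and the normalising constant $\exp(1/\lambda)$ cancel, as the gap is scale invariant.

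Next I would evaluate the two numerator integrals in closed form. Up to the constant $\exp(1/\lambda)$, $\Phi$ is the wrapped normal density, so it has the exact Fourier series with coefficients proportional to $\exp(-\lambda m^2 / 2)$. Parseval's identity then gives $\int_{-\pi}^\pi \Phi'^2 \diff \phi \propto \sum_m m^2 \exp(-\lambda m^2) = -\partial_\lambda \vartheta_3(e^{-\lambda})$ and, after writing $\sin^2(\phi) = \tfrac{1}{2}(1 - \cos(2\phi))$, $\int_{-\pi}^\pi \sin^2(\phi)\, \Phi^2 \diff \phi \propto (1 - e^{-\lambda})\, \vartheta_3(e^{-\lambda})$. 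Their product is therefore a closed-form multiple of $(1 - e^{-\lambda})\, \partial_\lambda (\vartheta_3(e^{-\lambda}))^2$; reorganising the constants against the denominator's normalisation produces the $\sinh(\lambda)\, \partial_\lambda(\vartheta_3(e^{-\lambda}))^2$ factor appearing in \eqref{eq:ug_bounds_gaussian}.

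The denominator integral $\int_{-\pi}^\pi \cos(\phi)\, \Phi^2 \diff \phi$ is where the work lies. Here I would isolate the principal Gaussian (the $n = 0$ term of $\Phi$), complete the square in $\int_{-\pi}^\pi \cos(\phi) \exp(-\phi^2 / \lambda) \diff \phi = e^{-\lambda/4} \Re \int_{-\pi}^\pi \exp(-(\phi - i \lambda / 2)^2 / \lambda) \diff \phi$, and recognise the result, after the substitution $u = (\phi - i\lambda/2)/\sqrt{\lambda}$, as the complex error function $\erf((2\pi - i \lambda)/\sqrt{4\lambda})$. The remaining wrap-around ($n \neq 0$) and cross terms are then estimated from above, and because they sit in the denominator this yields a \emph{lower} bound for the cosine integral: the Gaussian tails $\exp(-\pi^2 / 2\lambda)$ and the truncation of the half-line integral $\erf(-1/\sqrt{2\lambda})$ collect into $C_\lambda$, while the residual finite-interval pieces give the difference of error functions $\erf((\pi - i\lambda)/\sqrt{2\lambda}) - \erf((\tfrac{\pi}{2} - i\lambda)/\sqrt{2\lambda})$. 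Combining the exact numerator with this lower bound gives the claimed upper bound on $\UG_{\Phi_\lambda^{\mathcal N}}$.

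I expect this denominator estimate to be the main obstacle: one must track signs carefully so that the neglected terms genuinely shrink the denominator, keeping the inequality in the correct direction, and control the finite-interval truncation uniformly for small $\lambda$. Finally, for the limit I would apply the Jacobi (Poisson-summation) transformation $\vartheta_3(e^{-\lambda}) \sim \sqrt{\pi / \lambda}$, whence $\partial_\lambda \vartheta_3(e^{-\lambda}) \sim -\tfrac{1}{2} \sqrt{\pi}\, \lambda^{-3/2}$, together with $\erf(z) \to 1$ and $C_\lambda \to 0$ as $\lambda \downarrow 0$. A short computation shows numerator and denominator of \eqref{eq:ug_bounds_gaussian} share the same leading order, so the bound tends to $1$; since $\UG \geq 1$ always by Lem.~\ref{lem:uncertainty_principle}, we conclude $\UG_{\Phi_\lambda^{\mathcal N}} \to 1$.
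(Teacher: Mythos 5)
Your proposal is correct and takes essentially the same route as the paper's (far terser) proof: exact evaluation of the variance product via the Fourier coefficients of the wrapped Gaussian, which yields the $\vartheta_3$ numerator, together with a lower bound on the commutator term obtained from the sandwich $\tilde{\Phi}_\lambda^{\mathcal{N}} \leq \Phi_\lambda^{\mathcal{N}} \leq \tilde{\Phi}_\lambda^{\mathcal{N}} + 2\int_\pi^\infty \tilde{\Phi}_\lambda^{\mathcal{N}}(\phi)\,\diff\phi$, producing the $\erf$ terms, $C_\lambda$, and the $-2C_\lambda^2$ correction on the region where $\cos\phi < 0$, followed by the limit via theta-function asymptotics and the squeeze with $\UG \geq 1$. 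Your key identifications check out against \eqref{eq:ug_bounds_gaussian} -- for instance $\erf\left(\frac{2\pi - i\lambda}{\sqrt{4\lambda}}\right)$ is exactly the completed square of the principal Gaussian term, and you correctly flag the sign-tracking needed so the neglected wrap-around terms genuinely lower the denominator -- so the only real difference from the paper is your greater level of explicit detail.
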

\begin{proof} 
Upper bound \eqref{eq:ug_bounds_gaussian} follows by estimating $\Phi_\lambda^{\mathcal{N}}$: 
\begin{equation*}
\exp\left(\frac{1}{\lambda}\right) \exp\left(-\frac{\phi^2}{2}\right) \eqqcolon \tilde{\Phi}_\lambda^{\mathcal{N}}(\phi) \leq \Phi_\lambda^{\mathcal{N}}(\phi) \leq \tilde{\Phi}_\lambda^{\mathcal{N}}(\phi) + 2 \int_\pi^\infty \tilde{\Phi}_\lambda^{\mathcal{N}}(\phi) \diff \phi,
\end{equation*}
and substituting these estimates in \eqref{eq:uncertainty_principle}. We see that the upper bound \eqref{eq:ug_bounds_gaussian} converges to $1$ as $\lambda \downarrow 0$, since both the denominator and the numerator tend to $1$ as $\lambda \downarrow 0$: the denominator tends to $1$ since $C_\lambda \to 0$ and error functions tend to $1$; the numerator tends to $1$ since $-\lambda^2 \partial_\lambda (\vartheta_3(e^{-\lambda}))^2 \to \pi$.
Squeeze theorem then yields $\lim_{\lambda \downarrow 0} \UG_{\Phi_\lambda^{\mathcal{N}}} = 1$. \qed
\end{proof}
\begin{theorem}\label{thm:convergence}
Denote the angular profile of cake wavelet \eqref{eq:cake_wavelets} with $N$ orientations and spline order $k$ as $\Phi_{N, k}$. Set $N_\lambda(k) \coloneqq \sqrt{\frac{\pi^2 k}{3 \lambda}}$. Then, 
$\lim \limits_{k \to \infty, \lambda \uparrow 0} \UG_{\Phi_{N_\lambda(k), k}} = 1$.
\end{theorem}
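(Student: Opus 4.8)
The plan is to factor the double limit through the wrapped–normal profile $\Phi_\lambda^{\mathcal{N}}$ of Prop.~\ref{prop:convergence}. By Def.~\ref{def:cake_wavelets} the cake–wavelet angular profile $\Phi_{N, k}(\phi) = B_k(\tfrac{N}{2\pi}\phi)$ is, up to normalisation, the circular density of $\tfrac{2\pi}{N}\sum_{j=1}^{k+1} U_j \pmod{2\pi}$ with $U_j \sim \Uniform[-\tfrac12, \tfrac12]$ i.i.d., so its (unwrapped) variance equals $\tfrac{\pi^2(k+1)}{3N^2}$. The choice $N = N_\lambda(k) = \sqrt{\pi^2 k / 3\lambda}$ is exactly what pins this variance to $\lambda\,\tfrac{k+1}{k} \to \lambda$, the variance of $\Phi_\lambda^{\mathcal{N}}$. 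I would therefore prove (i) that for each fixed $\lambda$ we have $\UG_{\Phi_{N_\lambda(k), k}} \to \UG_{\Phi_\lambda^{\mathcal{N}}}$ as $k \to \infty$, and then (ii) invoke Prop.~\ref{prop:convergence} to send $\UG_{\Phi_\lambda^{\mathcal{N}}} \to 1$ as $\lambda \to 0$.

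For step (i) I would use a \emph{local} central limit theorem: the circular Fourier coefficients of $\Phi_{N, k}$ are powers $\operatorname{sinc}(\cdot)^{k+1}$ of the uniform characteristic function, and with $N = N_\lambda(k)$ these converge, coefficient-wise and with uniformly summable tails in the frequency index, to the Gaussian coefficients of $\Phi_\lambda^{\mathcal{N}}$. This upgrades the classical weak convergence to convergence $\Phi_{N_\lambda(k), k} \to \Phi_\lambda^{\mathcal{N}}$ in $H^1(S^1)$, which is precisely the topology the uncertainty gap requires. Indeed, from \eqref{eq:uncertainty_principle} and the generators \eqref{eq:generators}, for a real profile $\UG_\Phi$ is a continuous function of the four integrals $\int \abs{\Phi}^2$, $\int \sin\phi\,\abs{\Phi}^2$, $\int \sin^2\!\phi\,\abs{\Phi}^2$ and $\int \cos\phi\,\abs{\Phi}^2$ together with the orientation–variance $\langle(-i\partial_\phi)^2\rangle = \int \abs{\partial_\phi \Phi}^2 / \int \abs{\Phi}^2$; the first four need only $L^2$ control, but this last term is exactly what forces $H^1$ rather than $L^2$ convergence.

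Step (ii) is then immediate from Prop.~\ref{prop:convergence}. To assemble the two into the genuine joint limit I would exploit that Lem.~\ref{lem:uncertainty_principle} gives $\UG_{\Phi_{N_\lambda(k), k}} \geq 1$ throughout, so it suffices to produce a matching upper bound tending to $1$: combining the explicit bound \eqref{eq:ug_bounds_gaussian} with a quantitative $H^1$ estimate from step (i) yields $\limsup \UG_{\Phi_{N_\lambda(k), k}} \leq 1$, and the squeeze theorem closes the argument.

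The main obstacle is step (i), and in particular the \emph{strength} and \emph{uniformity} of the convergence. Weak convergence of the rescaled B-spline to the wrapped Gaussian is standard but too weak to control $\int \abs{\partial_\phi \Phi}^2$, the orientation–variance term; I would need the sharper local limit theorem for densities \emph{and} their derivatives, i.e. uniform control of the rescaled Fourier coefficients with a uniform tail bound. Moreover, because the limit is joint, I must check that the rate at which $k \to \infty$ suffices is not destroyed as $\lambda \to 0$ (the profiles concentrate, so higher spline order is needed): either I establish a bound $\epsilon(k, \lambda)$ on $\abs{\UG_{\Phi_{N_\lambda(k), k}} - \UG_{\Phi_\lambda^{\mathcal{N}}}}$ that is uniform in $\lambda$ near $0$, or — as a robust alternative — I compute $\int \abs{\partial_\phi \Phi_{N, k}}^2$ in closed form via Parseval (using $B_k' = B_{k-1}(\cdot + \tfrac12) - B_{k-1}(\cdot - \tfrac12)$) and mirror the explicit Gaussian estimate of Prop.~\ref{prop:convergence} directly for the B-spline, bypassing the CLT entirely.
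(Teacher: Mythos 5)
Your proposal takes essentially the same route as the paper: use the CLT (with the variance-matching choice $N_\lambda(k)$) to obtain $\Phi_{N_\lambda(k), k} \to \exp(-\frac{1}{\lambda})\Phi_\lambda^{\mathcal{N}}$ up to normalisation, exchange the limit with the uncertainty gap, and conclude via Prop.~\ref{prop:convergence} and the squeeze theorem. Your step (i) --- upgrading weak convergence to $H^1(S^1)$ convergence via a local CLT on the circular Fourier coefficients, $\operatorname{sinc}(n\pi/N)^{k+1} \to e^{-n^2 \lambda/2}$, precisely to control $\int \abs{\partial_\phi \Phi}^2$ --- is exactly the content the paper compresses into its one-line remark that the limit and UG may be exchanged ``as the wavelets lie in its domain and all involved operators are closed,'' so your plan correctly identifies, and is more explicit about, the only delicate point (including the uniformity in $\lambda$ needed for the joint limit, which the paper treats as an iterated limit).
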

\begin{proof}
The UG is unchanged by scaling the wavelet.
Since $\Uniform[-\frac{1}{2}, \frac{1}{2}]$ has mean $0$ and variance $\sqrt{1/12}$, the CLT gives $\sqrt{\frac{12}{k}} \sum_{i = 1}^k X_i \to \mathcal{N}(0, 1)$ if $X_i \overset{\textrm{i.i.d.}}{\sim} \Uniform[-\frac{1}{2}, \frac{1}{2}]$, so $\lim_{k \to \infty} \sqrt{\frac{(k + 1) \pi}{6}} \Phi_{N_\lambda(k), k} \to \exp(-\frac{1}{\lambda})\Phi_\lambda^{\mathcal{N}}$. 
We exchange limit and UG as the wavelets lie in its domain and all involved operators are closed. The result follows from Prop.~\ref{prop:convergence}. \qed
\end{proof}
Essentially, we must then increase the spline order to converge to the (scaled) density of a wrapped normal distribution $\Phi_\lambda^{\mathcal{N}}$, and send $\lambda \uparrow 0$ to reduce the uncertainty gap between $\Phi_\lambda^{\mathcal{N}}$ and the true minimum uncertainty state $\Phi_\lambda^{\mathrm{opt}}$.

We have verified this convergence in Thm.~\ref{thm:convergence} numerically, cf. Fig.~\ref{fig:convergence}. We indeed see that the UG of the angular profile of cake wavelets \eqref{eq:cake_wavelets} tends to $1$ as we increase the spline order and increase the number of orientations. Hence, for sufficiently high spline orders and numbers orientations, cake wavelets are close to optimal in this minimal uncertainty sense, given the prior constraints imposed the desiderata. For e.g. $k = 3$, $\lambda = 1$, so $N_\lambda(k) < 4$, the UG is already less than $1.1$.
\begin{figure}
\centering
\begin{subfigure}[t]{0.49\linewidth}
\begin{tikzpicture}[baseline=(current bounding box.north west)]
\begin{axis}[
    width=\linewidth,
    xlabel={$\phi$},
    ylabel={$\Phi$},
    cycle list name=color list,
    xtick={-pi, 0, pi},
    xticklabels={$-\pi$, $0$, $\pi$},
    xmin=-pi, xmax=pi,
    ymin=0, ymax=1,
    enlargelimits=false,
    no markers,
]

\definecolor{cvM}{rgb}{0., 0.466667, 0.733333}
\definecolor{cG}{rgb}{0.2, 0.733333, 0.933333}
\definecolor{cC3}{rgb}{0.8, 0.2, 0.0666667}
\definecolor{cC6}{rgb}{0.933333, 0.2, 0.466667}
\definecolor{cC9}{rgb}{0.933333, 0.466667, 0.2}
\definecolor{cC12}{rgb}{0., 0.6, 0.533333}

\addplot[thick, cvM, dashed] table[x index=0, y index=1] {Figures/angular_profiles.dat};
\addplot[thick, cG] table[x index=0, y index=2] {Figures/angular_profiles.dat};
\addplot[thick, cC3] table[x index=0, y index=3] {Figures/angular_profiles.dat};
\addplot[thick, cC6] table[x index=0, y index=4] {Figures/angular_profiles.dat};
\addplot[thick, cC9] table[x index=0, y index=5] {Figures/angular_profiles.dat};
\addplot[thick, cC12] table[x index=0, y index=6] {Figures/angular_profiles.dat};
\end{axis}
\end{tikzpicture}
\caption{Angular profiles $\lambda = 1/5$.}\label{fig:angular_profiles}
\end{subfigure}
\begin{subfigure}[t]{0.49\linewidth}
\begin{tikzpicture}[baseline=(current bounding box.north west)]
\begin{axis}[
    width=\linewidth,
    xlabel={$\lambda$},
    ylabel={$\UG_\Phi$},
    legend pos=north west,
    cycle list name=color list,
    xmin=0, xmax=1,
    ymin=1, ymax=1.1,
    enlargelimits=false,
    no markers,
]

\definecolor{cvM}{rgb}{0., 0.466667, 0.733333}
\definecolor{cG}{rgb}{0.2, 0.733333, 0.933333}
\definecolor{cC3}{rgb}{0.8, 0.2, 0.0666667}
\definecolor{cC6}{rgb}{0.933333, 0.2, 0.466667}
\definecolor{cC9}{rgb}{0.933333, 0.466667, 0.2}
\definecolor{cC12}{rgb}{0., 0.6, 0.533333}

\addplot[ultra thick, cvM, dashed] table[x index=0, y index=1] {Figures/UGs.dat};
\addlegendentry{\tiny $\Phi_\lambda^\mathrm{opt}$}
\addplot[ultra thick, cG] table[x index=0, y index=2] {Figures/UGs.dat};
\addlegendentry{\tiny $\Phi_\lambda^\mathcal{N}$}
\addplot[ultra thick, cC3] table[x index=0, y index=3] {Figures/UGs.dat};
\addlegendentry{\tiny $N_\lambda(3)$}
\addplot[ultra thick, cC6] table[x index=0, y index=4] {Figures/UGs.dat};
\addlegendentry{\tiny $N_\lambda(6)$}
\addplot[ultra thick, cC9] table[x index=0, y index=5] {Figures/UGs.dat};
\addlegendentry{\tiny $N_\lambda(9)$}
\addplot[ultra thick, cC12] table[x index=0, y index=6] {Figures/UGs.dat};
\addlegendentry{\tiny $N_\lambda(12)$}
\end{axis}
\end{tikzpicture}
\caption{Uncertainty gaps.}\label{fig:uncertainty_gap}
\end{subfigure}
\caption{The angular profile $\Phi_{N_\lambda(k), k}$ of cake wavelets \eqref{eq:cake_wavelets} approximates the optimal profile well both in (a) $\Ltwo(S^1)$ and (b) UG cf.~Thm.~\ref{thm:convergence}.}\label{fig:convergence}
\end{figure}
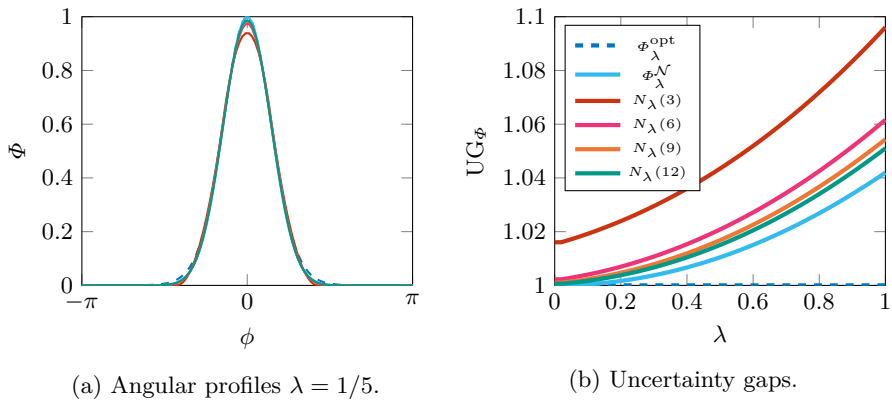


\section{Applications}\label{sec:applications}
Cake wavelets have a proven track record in classical multi-orientation analysis. It is noteworthy, then, that in G-CCNs \cite{Cohen2016GroupNetworks}, and PDE-G-CNNs \cite{Smets2022PDENetworks}, their PDE-based generalisations, data is lifted using trained kernels.
Fixing the lifting layer to use cake wavelets could be particularly valuable for PDE-G-CNNs, where most parameters are found in the lifting layer. The interpretability and connections to neurogeometry will also be improved. 
This was previously proposed in \cite{Bellaard2023GeometricPDEGCNNs}, which claims that fixing the lifting layer need not harm network expressivity. However, \cite{Bellaard2023GeometricPDEGCNNs} assumes that any lifting wavelet can be written as a linear combination of roto-translated cake wavelets; here we prove that this is indeed the case.
\begin{theorem}[No need to train the lifting layer]\label{thm:no_need_to_train_the_lifting_layer}
For any lifting wavelet $\psi^{\mathrm{train}}$, there exist $c^i \in \R$ and $A_i \in \se(2)$ such that -- on $\Ltwo^{\rho_0}(\Rtwo)$ -- we have
\begin{equation}\label{eq:no_need_to_train_the_lifting_layer}
\mathcal{W}_{\psi^{\mathrm{train}}} = \sum_i c^i \mathcal{W}_{\mathcal{U}_{\exp(A_i)}\psi} = \sum_i c^i \mathcal{R}_{\exp(A_i)} \mathcal{W}_{\psi} = \sum_i c^i \exp(\diff \mathcal{R}(A_i)) \mathcal{W}_{\psi},
\end{equation}
with $\mathcal{R}$ the $\SE(2)$ right regular representation given by $\mathcal{R}_g f(h) \coloneqq f(h g)$.
The left-invariant convection generated by $\diff \mathcal{R}(A_i)$ is solved by $\exp(\diff \mathcal{R}(A_i))$, which is precisely what is trained in the convection layer of a PDE-G-CNN \cite{Smets2022PDENetworks}.
\end{theorem}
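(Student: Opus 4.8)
The plan is to reduce the operator identity \eqref{eq:no_need_to_train_the_lifting_layer} to a statement about Fourier profiles on the disk $B(\vec 0, \rho_0)$, to dispatch the two algebraic equalities directly, and then to prove a Fourier-domain density statement that supplies the coefficients $c^i$ and generators $A_i$. First I would pass to the Fourier domain: using that $\mathcal{F}$ is unitary together with $\hat{\mathcal U}_{(\vec x, \theta)}\hat\psi(\vec\omega) = \exp(-i\vec\omega\cdot\vec x)\hat\psi(R_\theta^{-1}\vec\omega)$, one obtains $\mathcal W_\psi f(\vec x, \theta) = \int_{B(\vec 0, \rho_0)} \exp(i\vec\omega\cdot\vec x)\,\overline{\hat\psi(R_\theta^{-1}\vec\omega)}\,\hat f(\vec\omega)\diff\vec\omega$ for $f \in \Ltwo^{\rho_0}(\Rtwo)$, so $\mathcal W_\psi$ on $\Ltwo^{\rho_0}(\Rtwo)$ depends only on $\hat\psi|_{B(\vec 0, \rho_0)}$. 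Varying $\vec x$ and $\hat f$ shows $\mathcal W_\chi = 0$ on $\Ltwo^{\rho_0}(\Rtwo)$ iff $\hat\chi = 0$ a.e.\ on $B(\vec 0, \rho_0)$. Since $\psi \mapsto \mathcal W_\psi$ is conjugate-linear (and our $c^i$ are real), identity \eqref{eq:no_need_to_train_the_lifting_layer} reduces to finding $c^i \in \R$ and $(\vec b_i, R_i) \coloneqq \exp(A_i)$ with $R_i \in \SO(2, N)$ such that $\hat\psi^{\mathrm{train}}(\vec\omega) = \sum_i c^i \exp(-i\vec\omega\cdot\vec b_i)\,\hat\psi(R_i^{-1}\vec\omega)$ on $B(\vec 0, \rho_0)$.

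The two remaining equalities in \eqref{eq:no_need_to_train_the_lifting_layer} are structural and I would verify them directly. The intertwining $\mathcal W_{\mathcal U_g \psi} = \mathcal R_g \mathcal W_\psi$ follows because $\mathcal U$ is a representation: $\mathcal W_{\mathcal U_g \psi} f(\vec x, \theta) = (\mathcal U_{(\vec x, \theta)}\mathcal U_g \psi, f) = (\mathcal U_{(\vec x, \theta) g}\psi, f) = \mathcal W_\psi f((\vec x, \theta) g) = \mathcal R_g \mathcal W_\psi f(\vec x, \theta)$, where restricting $R_i \in \SO(2, N)$ keeps $(\vec x, \theta) g \in \SE(2, N)$ so that the right regular representation acts within $\Ltwo(\SE(2, N))$; such $(\vec b_i, R_i)$ are realisable as $\exp(A_i)$ by surjectivity of the $\SE(2)$ exponential. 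The final equality $\mathcal R_{\exp(A_i)} = \exp(\diff\mathcal R(A_i))$ is the one-parameter-group property of the strongly continuous representation $\mathcal R$, i.e.\ $t \mapsto \mathcal R_{\exp(t A_i)}$ is generated by $\diff\mathcal R(A_i)$ in the sense of \eqref{eq:lie_algebra_representation}, evaluated at $t = 1$; and $\exp(\diff\mathcal R(A_i))$ is exactly the solution operator of the left-invariant convection it generates.

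The crux is the density statement. I would show that the closed linear span of $\{\,\vec\omega \mapsto \exp(-i\vec\omega\cdot\vec b)\,\hat\psi(R^{-1}\vec\omega)\big|_{B(\vec 0, \rho_0)} : \vec b \in \Rtwo,\ R \in \SO(2, N)\}$ is all of $\Ltwo(B(\vec 0, \rho_0))$. Suppose $h$ is orthogonal to every such function. For fixed $R$, orthogonality against $\exp(-i\vec\omega\cdot\vec b)\hat\psi(R^{-1}\vec\omega)$ for all $\vec b$ says that the Fourier transform of $\overline{h}\,\hat\psi(R^{-1}\cdot)\,\vec 1_{B(\vec 0, \rho_0)}$ vanishes identically, hence $\overline{h(\vec\omega)}\,\hat\psi(R^{-1}\vec\omega) = 0$ a.e. As $R$ ranges over $\SO(2, N)$, the sets $\{\hat\psi(R^{-1}\cdot) \neq 0\}$ cover $B(\vec 0, \rho_0) \setminus \{\vec 0\}$: by Cor.~\ref{cor:polar_profiles} the radial profile $\mathcal M$ is nonvanishing on $(0, \rho_0)$, while the rotated angular profiles satisfy $\sum_\theta \Phi(\phi - \theta) \approx 1$, so no orientation is missed. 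Therefore $h = 0$, giving density; since $\hat\psi^{\mathrm{train}}|_{B(\vec 0, \rho_0)} \in \Ltwo(B(\vec 0, \rho_0))$ it lies in this closed span, producing the required $c^i$ and $A_i$.

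The main obstacle is precisely this density step, with two points needing care. First, the zero set of $\hat\psi$ must not trap any part of $B(\vec 0, \rho_0)$ under the admissible rotations; this is exactly where the fast-reconstruction (partition-of-unity) property of cake wavelets from Cor.~\ref{cor:polar_profiles} is indispensable, and it is what fails for a wavelet with persistent common zeros. Second, the span is only dense, so \eqref{eq:no_need_to_train_the_lifting_layer} is to be read in the $\Ltwo^{\rho_0}(\Rtwo)$-closure: the $c^i, A_i$ realise $\mathcal W_{\psi^{\mathrm{train}}}$ as an arbitrarily good approximation by finite combinations, which is all that is needed to justify fixing, rather than training, the lifting layer.
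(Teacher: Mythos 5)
Your treatment of the two algebraic equalities (the intertwining $\mathcal{W}_{\mathcal{U}_g\psi} = \mathcal{R}_g\mathcal{W}_\psi$ and $\mathcal{R}_{\exp(A_i)} = \exp(\diff\mathcal{R}(A_i))$) matches the paper, and your Fourier-domain density argument is sound as far as it goes. But it proves a strictly weaker statement than the theorem asserts: you only place $\hat{\psi}^{\mathrm{train}}|_{B(\vec{0},\rho_0)}$ in the \emph{closed} span of the roto-translated profiles, and your final paragraph explicitly re-reads \eqref{eq:no_need_to_train_the_lifting_layer} ``in the closure''. The theorem, however, is an exact operator identity with finitely many terms, and the paper proves it as such. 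The idea you are missing is the modelling assumption the paper exploits: a \emph{trained} lifting kernel is pixelated, hence lies in the finite span of Dirac deltas $\delta_{\vec{x}_i}$ at the pixel positions; on $\Ltwo^{\rho_0}(\Rtwo)$ each $\delta_{\vec{x}}$ acts as the reproducing kernel $K^{\rho_0}_{\vec{x}}$; and fast reconstruction for cake wavelets is \emph{exact}, $\sum_{\theta \in \SO(2,N)}\hat{\psi}(R_\theta^{-1}\cdot) = \vec{1}_{B(\vec{0},\rho_0)}$, which yields the exact identity $K^{\rho_0}_{\vec{x}} = \sum_{\theta \in \SO(2,N)}\mathcal{U}_{(\vec{x},\theta)}\psi$. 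Combining these gives $\psi^{\mathrm{train}} = \sum_i c^i\, \mathcal{U}_{\exp(A_i)}\psi$ exactly, with one term per pixel--orientation pair, no limit needed. Where your argument uses only that the rotated angular profiles have no common zero, the paper uses that they sum exactly to one; that is precisely the difference between density and an exact finite decomposition, and it matters for the application, since the theorem is meant to guarantee that a fixed cake-wavelet lift followed by finitely many trained convections can \emph{represent}, not merely approximate, any trained lifting layer, with an explicit parameter count.

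Your route does have a compensating virtue: it needs no discreteness of $\psi^{\mathrm{train}}$ and so covers arbitrary proper wavelets in $\Lone(\Rtwo) \intersection \Ltwo(\Rtwo)$, where the paper's argument is restricted to kernels in the span of pixel deltas. Moreover, the gap closes cheaply inside your own framework: for $\psi^{\mathrm{train}} = \sum_i \tilde{c}^i \delta_{\vec{x}_i}$ one has, on $B(\vec{0},\rho_0)$, $\hat{\psi}^{\mathrm{train}}(\vec{\omega}) = \sum_i \tilde{c}^i e^{-i \vec{\omega}\cdot\vec{x}_i}$, and the exact partition of unity lets you write $e^{-i\vec{\omega}\cdot\vec{x}_i} = \sum_{\theta \in \SO(2,N)} e^{-i\vec{\omega}\cdot\vec{x}_i}\hat{\psi}(R_\theta^{-1}\vec{\omega})$, which is exactly the finite combination of functions $\hat{\mathcal{U}}_{(\vec{x}_i,\theta)}\hat{\psi}$ that your density argument only approximated. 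With that replacement (and your correct observations about conjugate-linearity, the restriction $R_i \in \SO(2,N)$ keeping $\mathcal{R}$ inside $\Ltwo(\SE(2,N))$, and surjectivity of $\exp$ on $\SE(2)$), your proof becomes a faithful Fourier-domain version of the paper's.
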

\begin{proof}
Note that $\mathcal{W}_{\sum_i c^i \psi_i} = \sum_i c^i \mathcal{W}_{\psi_i}$ for any proper wavelets $\psi_i$ and $c^i \in \R$. Trained lifting wavelets lie in the span of $\{\delta_{\vec{x}_i}\}_i$, with $\vec{x}_i$ the locations of the pixels within the kernel. On $\Ltwo^{\rho_0}(\Rtwo)$, $\delta$ coincides with the reproducing kernel (RK) of $\Ltwo^{\rho_0}(\Rtwo)$, $K^{\rho_0}$ \cite{Duits2007ImageGroup}. The fast reconstruction property of cake wavelets gives
\begin{equation*}
\sum_{\theta \in \SO(2, N)} \hat{\psi}(R_\theta^{-1} \cdot) = 1_{B(\vec{0}, \rho)} \implies \sum_{\theta \in \SO(2, N)} \psi(R_\theta^{-1} \cdot) = K_{\vec{0}}^\rho, \textrm{ so that}
\end{equation*}
\begin{equation*}
K_{\vec{x}}^\rho = \mathcal{U}_{(\vec{x}, 0)} \sum_{\theta \in \SO(2, N)} \psi(R_\theta^{-1} \cdot) = \mathcal{U}_{(\vec{x}, 0)} \sum_{\theta \in \SO(2, N)} \mathcal{U}_{(\vec{0}, \theta)} \psi = \sum_{\theta \in \SO(2, N)} \mathcal{U}_{(\vec{x}, \theta)} \psi.
\end{equation*}
As $\exp$ is surjective, we can find $A \in \se(2)$ such that $\exp(A) = (\vec{x}, \theta)$, so
\begin{equation*}
\psi^{\textrm{train}} = \sum_i \tilde{c}^i K_{\vec{x}_i}^\rho = \sum_{\theta \in \SO(2, N)} \sum_i \tilde{c}^i \mathcal{U}_{(\vec{x}_i, \theta)} \psi = \sum_i c^i \mathcal{U}_{\exp(A_i)} \psi,
\end{equation*}
proving the first equality in \eqref{eq:no_need_to_train_the_lifting_layer}. The second equality follows from
\begin{equation*}
\mathcal{W}_{\mathcal{U}_g \psi} f(h) \coloneqq (\mathcal{U}_h \mathcal{U}_g \psi, f) = (\mathcal{U}_{hg} \psi, f) \eqqcolon \mathcal{W}_{\psi} f(hg) \eqqcolon \mathcal{R}_g \mathcal{W}_{\psi} f(h).
\quad \quad  \qed
\end{equation*}
\end{proof}
So a layer that lifts with cake wavelets and performs sufficient left-invariant convections can represent any trained lifting kernel, if the data is in $\Ltwo^{\rho_0}(\Rtwo)$.  

We will now compare fixed and trained lifting layers in (PDE-)G-CNNs experimentally. We implement the networks with the Python package \texttt{lietorch} \cite{Smets2022PDENetworks}.
Each network has six layers: one lifting layer, four convolutional layers, and one projection layer. In each case, the convolutional layers have sixteen channels; the fixed lifting layers use 40 convections. Each architecture is trained and tested ten times on DRIVE, a retinal vessel segmentation dataset \cite{2004StaalDRIVE}. We quantify the performance with the Dice coefficient.
\begin{figure}
\centering
\begin{subfigure}[t]{0.6\textwidth}
\begin{tikzpicture}[baseline=(current bounding box.west)]
\begin{axis}[
    boxplot/draw direction=x,
    height=4.2cm,
    xlabel={Dice Coefficient},
    xtick={0.8, 0.81, 0.82},
    xmin=0.8, xmax=0.82,
    ytick={1, 2, 3, 4},
    yticklabels={Trained Lift G-CNN, Fixed Lift G-CNN, Trained Lift PDE-G-CNN, Fixed Lift PDE-G-CNN}
]

\addplot [
    boxplot prepared={
        lower whisker=0.8047211454871753,
        lower quartile=0.8078151723146143,
        median=0.8090763989328762,
        upper quartile=0.8106601153425078,
        upper whisker=0.8146789523085497,
        draw position=1
    },
    draw=blue,
    thick
] coordinates {};

\addplot [
    boxplot prepared={
        lower whisker=0.8019952798515344,
        lower quartile=0.8026249637651486,
        median=0.8038791005054067,
        upper quartile=0.8049505942324535,
        upper whisker=0.806835370986224,
        draw position=2
    },
    draw=black,
    thick
] coordinates {};

\addplot [
    boxplot prepared={
        lower whisker=0.8110121964060122,
        lower quartile=0.8133721545186188,
        median=0.8139536540297228,
        upper quartile=0.8145108374858883,
        upper whisker=0.816908940738388,
        draw position=3
    },
    draw=black,
    thick
] coordinates {};

\addplot [
    boxplot prepared={
        lower whisker=0.8037191760546583,
        lower quartile=0.8059155932934656,
        median=0.8068801451093228,
        upper quartile=0.8084884533293459,
        upper whisker=0.8099708539299393,
        draw position=4,
    },
    draw=Green,
    thick
] coordinates {};

\end{axis}
\end{tikzpicture}
\end{subfigure}%
\hfill
\begin{subfigure}[t]{0.35\textwidth}
\begin{tabular}{c|c}
Architecture & Parameters \\ \hline
\begin{tabular}{@{}c@{}} Fixed Lift \\ PDE-G-CNN \end{tabular} & {\color{Green}\textbf{2704}} \\ \hline
\begin{tabular}{@{}c@{}} Trained Lift \\ PDE-G-CNN \end{tabular} & 4128 \\ \hline
\begin{tabular}{@{}c@{}} Fixed Lift \\ G-CNN \end{tabular} & 97360 \\ \hline
\begin{tabular}{@{}c@{}} Trained Lift \\ G-CNN \end{tabular} & {\color{blue}98784}
\end{tabular}
\end{subfigure}
\caption{Comparison of (PDE-)G-CNNs tested on DRIVE.}
\label{fig:experiments}
\end{figure}
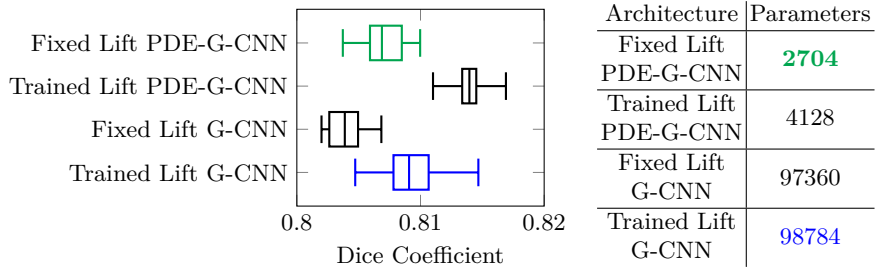
The results are summarised in Fig.~\ref{fig:experiments}. The box plots show that the PDE-G-CNNs perform slightly better than the G-CNNs, and that fixing the lifting layer slightly reduces performance ($<1~\%$). In this case, fixing the lifting layer reduces the number of trainable parameters by 1400; for the PDE-G-CNN this is a sizeable reduction of 34~\%. We have two possible explanations for the slight decrease in performance in spite of Thm.~\ref{thm:no_need_to_train_the_lifting_layer}: (1) the fact that the networks \emph{can} represent the same function does not necessarily mean they \emph{will}, and (2) we assumed the data was disk limited and hence may have lost some important high frequency information.

\subsubsection{Conclusion \& Future Work.}
We showed that for an orientation score with fast reconstruction, position-orientation uncertainty is minimised by wavelets that are well-approximated by cake wavelets (Fig.~\ref{fig:angular_profiles}): the uncertainty gap of the angular profile of cake wavelets converges to the minimum of $1$ when the spline order increases and variance decreases (Thm.~\ref{thm:convergence} and Fig.~\ref{fig:uncertainty_gap}). 
We then showed that we can greatly reduce the number of required parameters and improve the interpretability of (PDE-)G-CNNs by fixing the lifting layer to use cake wavelets, supported by Thm.~\ref{thm:no_need_to_train_the_lifting_layer}, with only slight impact on the model's performance (Fig.~\ref{fig:experiments}). 
In future work we wish to axiomatically derive lifting wavelets for orientation scores of 3D images \cite{Janssen2018DesignImages}.

\subsubsection{Acknowledgements.}
The Dutch foundation of Research (NWO) is gratefully acknowledged for its financial support via VIC.202.031. Furthermore, EAISI is gratefully acknowledged for financial support through the EIDMAR programme.

\bibliographystyle{splncs04}
\bibliography{references}
\end{document}